\documentclass[11pt,reqno,a4paper]{amsart}
\usepackage{euscript}

\newtheorem{theorem}{Theorem}
\newtheorem{proposition}{Proposition}
\newtheorem{corollary}{Corollary}
\newtheorem{lemma}{Lemma}
\newtheorem{remark}{Remark}
\newtheorem{example}{Example}
\renewcommand{\epsilon}{\varepsilon}
\renewcommand{\phi}{\varphi}

\DeclareMathOperator{\Ima}{Im}

\def\Z{\mathbb{Z}}
\def\R{\mathbb{R}}
\def\cA{\EuScript{A}}

\def\Id{\text{\rm Id}}

\begin{document}

\title[Quasi-shadowing for partially hyperbolic dynamics]{Quasi-shadowing for partially hyperbolic dynamics on Banach spaces}

\begin{abstract}
A partially hyperbolic dynamical system is said to have the quasi-shadowing property if every pseudotrajectory  can be shadowed by a sequence of points $(x_n)_{n\in \Z}$ such that $x_{n+1}$ is obtained from the image of $x_n$ by moving it by a small factor in the central direction. In the present paper, we prove that a  small nonlinear perturbation of a partially dichotomic sequence of (not necessarily invertible) linear operators acting on an arbitrary  Banach space has  the quasi-shadowing property. We also get obtain a continuous time version of this result. As an application of our main result, we prove that a certain class of partially dichotomic sequences of linear operators is stable up to the movement in the  central direction.
\end{abstract}

\author{Lucas Backes}
\address{\noindent Departamento de Matem\'atica, Universidade Federal do Rio Grande do Sul, Av. Bento Gon\c{c}alves 9500, CEP 91509-900, Porto Alegre, RS, Brazil.}
\email{lucas.backes@ufrgs.br} 

\author{Davor Dragi\v cevi\'c}
\address{Department of Mathematics, University of Rijeka, Croatia}
\email{ddragicevic@math.uniri.hr}

\date{\today}

\keywords{Quasi-shadowing, Nonautonomus systems, Partial dichotomy, Nonlinear perturbations}
\subjclass[2010]{Primary: 37C50, 34D09; Secondary: 34D10.}
\maketitle

\maketitle

\section{Introduction}

Let $M=(M,d)$ be a metric space and let $(F_n)_{n\in \Z}$ be a sequence of maps acting on $M$. Given $\delta>0$, a sequence of points $(y_n)_{n\in \Z}$ of $M$ satisfying 
\begin{equation}\label{eq: pseudo intro}
d(y_{n+1}, F_n(y_n))<\delta \text{ for every } n\in \Z,
\end{equation}
is said to be a \emph{$\delta$-pseudotrajectory} for the nonautonomous dynamics given by
\begin{equation}\label{eq: diff eq intro}
x_{n+1}=F_n(x_n), \quad n\in \Z.
\end{equation}
We say that the system \eqref{eq: diff eq intro} has the \emph{shadowing property} if for every $\varepsilon>0$ there exists $\delta>0$ such that for any $\delta$-pseudotrajectory $(y_n)_{n\in \Z}$ of \eqref{eq: diff eq intro}, there exists a sequence of points $(x_n)_{n\in \Z}$ in $M$ satisfying \eqref{eq: diff eq intro} such that
\begin{equation}\label{eq: shadow intro}
d(x_n, y_n)<\varepsilon \text{ for every } n\in \Z.
\end{equation}
In other words, any pseudotrajectory can be approximated (in the sense of \eqref{eq: shadow intro}) by a true trajectory. 

The shadowing property has proved to be a very powerful tool in many situations as, for instance, when dealing with problems concerned with topological stability and construction of symbolic dynamics (see \cite{Bow75}). Consequently, the problem of describing classes of systems that exhibit this property turned out to an important direction of the research in the field of dynamical systems.

Based on works of Poincar\'e and its predecessors, Smale \cite{S67} introduced in the 60's the notion of \emph{(uniform) hyperbolicity} in order to provide a mathematical framework for the rigorous study of dynamical systems that exhibit sensitive dependence on initial conditions. It turns out that uniformly hyperbolic dynamical systems (both with discrete and continuous time) have the  shadowing property~\cite{An70,Bow75}. In fact, it was shown that in certain settings,  the notions of hyperbolicity and (Lipschitz) shadowing are actually (almost) equivalent. Indeed, Pilyugin and Tikhomirov~\cite{PT10} proved that a diffeomorphism on a compact Riemannian manifold $M$ has the so-called Lipschitz shadowing property if and only if it is structurally stable. While structurally stable diffeomorphisms are not necessarily Anosov (i.e. uniformly hyperbolic on entire $M$), it should be noted that they exhibit uniform  hyperbolicity on the set of nonwandering points. 
A similar result to that in~\cite{PT10} but dealing with the nonautonomous linear dynamics was established in~\cite[Proposition 3]{BD}. Indeed, it was proved in~\cite{BD} that if $M$ is a finite-dimensional Banach space and if maps $F_n$ are linear  then~\eqref{eq: diff eq intro} has the Lipschitz shadowing property if and only if it admits an exponential trichotomy (which means that it is hyperbolic on both $\Z^+$ and $\Z^-$, although not necessarily on entire line $\Z$).
It turns out that the situation in the infinite-dimensional setting is much more complicated.  Indeed,  in~\cite{BCDMP} the authors deal with the situation when $M$ is an infinite-dimensional Banach space and consider the autonomous setting when $F_n=A$ for $n\in \Z$, where $A$ is some invertible bounded linear operator on $M$. They give explicit examples in which $A$ is not hyperbolic but~\eqref{eq: diff eq intro} nevertheless has the shadowing property.

Our objective in this paper is study the shadowing property beyond (uniform) hyperbolicity in the nonautonomous context. To this end, we look at \emph{partially hyperbolic systems}. A first observation is that one cannot expect to recover the shadowing property, at least in its full strength, for general partially hyperbolic systems. In fact, it was observed in \cite{BDT} that the shadowing property is not verified (not even generically) for partially hyperbolic diffeomorphisms which are robustly transitive (see also \cite{AD07,YY00} for examples of some large classes of non-uniformly hyperbolic systems that do not satisfy the shadowing property). On the other hand, as we are going to see in the sequel, it is possible to get a weaker version of the shadowing property called \emph{quasi-shadowing}. In the terminology of the above paragraphs, we say that \eqref{eq: diff eq intro} has the quasi-shadowing property if for every $\varepsilon>0$ there exists $\delta>0$ so that for any $\delta$-pseudotrajectory $(y_n)_{n\in \Z}$ there exists a ``quasi-trajectory" $(x_n)_{n\in \Z}$ of \eqref{eq: diff eq intro} satisfying \eqref{eq: shadow intro}. By $(x_n)_{n\in \Z}$ being a quasi-trajectory of \eqref{eq: diff eq intro} we mean that it is a trajectory of the system up to moving it by a small factor in the central direction, i.e. $x_{n+1}$ is obtained from $F_n(x_n)$ by shifting it by a small factor in the central direction (precise definitions are postponed to Section \ref{sec: setup}).

The notion of quasi-shadowing has already been explored for some classes of partially hyperbolic systems. For instance, in \cite{BB16, CRV, HZZ15,KT13,ZZ17} versions of the quasi-shadowing property were established for partially hyperbolic diffeomorphisms acting on compact manifolds. More recently some of these results were extended to partially hyperbolic flows~\cite{LZ20}. In this work, we deal with not necessarily invertible  \emph{nonautonomous dynamics} acting on \emph{infinite dimensional} spaces.  More precisely, starting with a linear dynamics
\begin{equation}\label{eq: intr}
x_{m+1}=A_m x_m \quad m\in \Z, 
\end{equation}
where the sequence $(A_m)_{m\in \Z}$ admits a \emph{partial dichotomy}, we prove that a small \emph{nonlinear} perturbation of~\eqref{eq: intr} has the quasi-shadowing property. We also obtain a continuous time version of this result. Moreover, our general approach using \emph{Banach sequence spaces} allow us to get various versions of the quasi-shadowing property, meaning that the error allowed in the pseudotrajectory \eqref{eq: pseudo intro} and in the ``shadowing" given by \eqref{eq: shadow intro} can be taken to be  small according to various norms (such as  $l^p$ norm or the $c_0$ norm), simply by taking the appropriate sequence space (see Remark \ref{remark: general approach}). As an application of our main result we prove that a certain class of partial dichotomic sequence of linear maps is stable up to moving it in the central direction (see Section \ref{sec: quasi-stability}). The proof of our main result has an analytic flavor and consists basically of showing that a certain operator is a contraction when acting in an appropriate space. Then, using the fixed point of this operator we are able to construct the quasi-trajectory we are looking for. These arguments are inspired by our previous work on nonautonomous shadowing~\cite{BD19, BD}, which in turn are inspired by some classical
analytic approaches to shadowing~\cite{CLP, MS}.

\section{Preliminaries}\label{P}
\subsection{Banach sequence spaces}
In this subsection we recall some basic
definitions and properties from the theory of Banach sequence spaces. The material is taken from~\cite{DD, Sasu}, where the reader can also find more details. 

Let $\mathcal{S}(\Z)$ be the set of all sequences $\mathbf{s}=(s_n)_{n\in \Z}$ of real numbers. We say that a linear subspace $B\subset \mathcal{S}(\Z)$ is a \emph{normed sequence space} (over $\Z$) if there exists a norm $\lVert \cdot \rVert_B \colon B \to \R_0^+$ such that if $\mathbf{s}'=(s_n')_{n\in \Z}\in B$ and $\lvert s_n\rvert \le \lvert s_n'\rvert$ for $n\in \Z$, then $\mathbf{s}=(s_n)_{n\in \Z}\in B$ and $\lVert \mathbf{s}\rVert_B \le \lVert \mathbf{s}'\rVert_B$. If in addition $(B, \lVert \cdot \rVert_B)$ is complete, we say that $B$ is a \emph{Banach sequence space}.

Let $B$ be a Banach sequence space over $\Z$. We say that $B$ is \emph{admissible} if:
\begin{enumerate}
\item
$\chi_{\{n\}} \in B$ and $\lVert \chi_{\{n\}}\rVert_B >0$ for $n\in \Z$, where $\chi_A$ denotes the characteristic function of the set $A\subset \Z$;
\item
for each $\mathbf{s}=(s_n)_{n\in \Z}\in B$ and $m\in \Z$, the sequence $\mathbf{s}^m=(s_n^m)_{n\in \Z}$ defined by $s_n^m=s_{n+m}$ belongs to $B$ and  $\lVert \mathbf{s}^m \rVert_B = \lVert \mathbf{s}\rVert_B$.
\end{enumerate}
Note that it follows from the definition that for each admissible Banach space $B$ over $\Z$, we have that $\lVert \chi_{\{n\}}\rVert_B=\lVert \chi_{\{0\}}\rVert_B$ for each $n\in \Z$. Throughout this paper we will assume for the sake of simplicity  that $\lVert \chi_{\{0\}}\rVert_B=1$.

We recall some  explicit examples of admissible  Banach sequence spaces over $\Z$ (see~\cite{DD,Sasu}).

\begin{example}\label{ex1}
The set \[l^\infty =\bigg{\{} \mathbf{s}=(s_n)_{n\in \Z} \in \mathcal{S} (\Z): \sup_{n\in \Z} \lvert s_n \rvert < \infty \bigg{\}}\] is an admissible Banach sequence space when equipped with the norm $\lVert \mathbf{s} \rVert =\sup_{n\in \Z} \lvert s_n \rvert$.
\end{example}

\begin{example}\label{ex2}
The set \[c_0= \bigg{\{} \mathbf{s}=(s_n)_{n\in \Z} \in \mathcal{S} (\Z): \lim_{\lvert n\rvert \to \infty} \lvert s_n\rvert=0 \bigg{\}}  \] is an admissible Banach sequence space when equipped with the norm $\lVert \cdot \rVert$ from Example~\ref{ex1}.
\end{example}

\begin{example}\label{ex3}
For each $p\in [1, \infty )$, the set \[ l^p= \bigg{\{} \mathbf{s}=(s_n)_{n\in \Z} \in \mathcal{S}(\Z): \sum_{n\in \Z} \lvert s_n \rvert^p <\infty \bigg{\}} \] is an admissible Banach sequence space when equipped with the norm \[\lVert \mathbf{s} \rVert= \bigg{(}\sum_{n\in \Z} \lvert s_n \rvert^p \bigg{)}^{1/p}.\]
\end{example}

\begin{example}[Orlicz sequence spaces]
Let $\phi \colon (0, +\infty) \to (0, +\infty]$ be a nondecreasing nonconstant left-continuous function. We set $\psi(t)=\int_0^t \phi(s) \, ds$  for $t\ge 0$. Moreover, for each $\mathbf s=(s_n)_{n\in \Z} \in \mathcal S(\Z)$, let $M_\phi (\mathbf s)=\sum_{n\in \Z} \psi(\lvert s_n \rvert)$. Then
\[
B=\bigg{\{} \mathbf s \in \mathcal S(\Z) : M_\phi (c \mathbf s)<+\infty \ \text{for some} \ c>0 \bigg{\}}
\]
is an admissible Banach sequence space when equipped with the norm
\[
\lVert \mathbf s \rVert=\inf \bigl\{ c>0 : M_\phi ( \mathbf s / c ) \le 1 \bigr\}.
\]
\end{example}

\subsection{Banach spaces associated to Banach sequence spaces}
Let us now introduce sequence spaces that will play important role in our arguments. 
Let $X$ be an arbitrary Banach space and $B$ any Banach sequence space over $\Z$ with norm $\lVert \cdot \rVert_B$. Set
\[
X_B:=\bigg{\{} \mathbf x=(x_n)_{n\in \Z} \subset X: (\lVert x_n\rVert)_{n\in \Z}\in B \bigg{\}}.
\]
Finally, for $\mathbf x=(x_n)_{n\in \Z} \in X_B$ we define 
\begin{equation}\label{nn}
\lVert \mathbf x\rVert_B:=\lVert (\lVert x_n\rVert)_{n\in \Z}\rVert_B.
\end{equation}
\begin{remark}
We emphasize that in~\eqref{nn} we slightly abuse the notation since norms on  $B$ and $X_B$ are denoted in the same way. However, this will cause no confusion since in the rest of the paper we will deal with spaces $X_B$.
\end{remark}
\begin{example}
Let $B=l^\infty$ (see Example~\ref{ex1}). Then, 
\[
X_B=\bigg{\{} \mathbf x=(x_n)_{n\in \Z} \subset X: \sup_{n\in \Z} \lVert x_n\rVert<\infty \bigg{\}}.
\]
\end{example}
The proof of the following result is straightforward (see~\cite{DD, Sasu}).
\begin{proposition}
$(X_B, \lVert \cdot \rVert_B)$ is a Banach space. 
\end{proposition}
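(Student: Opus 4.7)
The plan is to reduce both the norm axioms and the completeness of $(X_B, \lVert \cdot \rVert_B)$ to the corresponding properties of the scalar sequence space $B$, using the solid (``lattice'') property built into the definition of a Banach sequence space.

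First, I would verify the norm axioms. Homogeneity is immediate. The triangle inequality follows from $\lVert x_n+y_n \rVert \le \lVert x_n \rVert + \lVert y_n \rVert$ coordinatewise, combined with the solid property and subadditivity of $\lVert \cdot \rVert_B$ on $B$; the same estimate shows in passing that $X_B$ is a linear subspace of $X^{\Z}$. For positive definiteness, if $\lVert \mathbf x \rVert_B=0$, then for each $n$ the scalar sequence $\lVert x_n \rVert \, \chi_{\{n\}}$ is pointwise dominated by $(\lVert x_m \rVert)_{m\in\Z}$, so the solid property gives $\lVert x_n \rVert \cdot \lVert \chi_{\{n\}} \rVert_B \le \lVert \mathbf x \rVert_B=0$; admissibility then forces $x_n=0$ for every $n$.

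Next, let $(\mathbf x^{(k)})_k$ be Cauchy in $X_B$. The same estimate as above yields $\lVert x_n^{(k)} - x_n^{(l)} \rVert \cdot \lVert \chi_{\{n\}} \rVert_B \le \lVert \mathbf x^{(k)} - \mathbf x^{(l)} \rVert_B$ for every $n$, so each coordinate $(x_n^{(k)})_k$ is Cauchy in the Banach space $X$ and converges to some $x_n \in X$. Set $\mathbf x=(x_n)_{n\in\Z}$. It remains to show that $\mathbf x \in X_B$ and $\mathbf x^{(k)} \to \mathbf x$ in $\lVert \cdot \rVert_B$.

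This last step is the main point and the one I expect to be the real obstacle, since the definition of a Banach sequence space does not include a Fatou-type property that would let me pass a pointwise limit through $\lVert \cdot \rVert_B$. The way around this is to avoid pointwise limits inside $B$ altogether and to argue entirely via Cauchy sequences in $B$. Fix $k$ and consider $\mathbf u^{(l)}:=(\lVert x_n^{(k)} - x_n^{(l)} \rVert)_{n\in\Z}$, which lies in $B$ because $X_B$ is a linear subspace. The reverse triangle inequality on $X$ gives $\lvert u_n^{(l)} - u_n^{(l')} \rvert \le \lVert x_n^{(l)} - x_n^{(l')} \rVert$ for each $n$, so by the solid property $\lVert \mathbf u^{(l)} - \mathbf u^{(l')} \rVert_B \le \lVert \mathbf x^{(l)} - \mathbf x^{(l')} \rVert_B$. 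Hence $(\mathbf u^{(l)})_l$ is Cauchy in $B$ and, by completeness of $B$, converges to some $\mathbf u^\ast \in B$. Convergence in $B$ implies coordinatewise convergence (by the first estimate in this paragraph), and pointwise $u_n^{(l)} \to \lVert x_n^{(k)} - x_n \rVert$, so $u_n^\ast = \lVert x_n^{(k)} - x_n \rVert$ for every $n$. Therefore $\mathbf x^{(k)} - \mathbf x \in X_B$, whence $\mathbf x \in X_B$, and $\lVert \mathbf x^{(k)} - \mathbf x \rVert_B = \lVert \mathbf u^\ast \rVert_B = \lim_l \lVert \mathbf u^{(l)} \rVert_B = \lim_l \lVert \mathbf x^{(k)} - \mathbf x^{(l)} \rVert_B$, which is small uniformly in $k$ large by the Cauchy hypothesis. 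This gives completeness.
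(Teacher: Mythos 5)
The paper does not actually prove this proposition: it dismisses it as ``straightforward'' and refers the reader to \cite{DD, Sasu}, so there is no in-text argument to compare yours against. Your proof is correct and self-contained. You are right to flag the completeness step as the real issue: the paper's definition of a Banach sequence space asks only for the solid (ideal) property together with completeness of $B$, and contains no Fatou-type axiom that would allow one to pass a pointwise limit through $\lVert\cdot\rVert_B$. Your workaround — fixing $k$, showing via the reverse triangle inequality and solidity that $\mathbf u^{(l)}=(\lVert x^{(k)}_n-x^{(l)}_n\rVert)_{n\in\Z}$ is Cauchy in $B$, passing to its $B$-limit, and identifying that limit coordinatewise as $(\lVert x^{(k)}_n-x_n\rVert)_{n\in\Z}$ — is a clean and correct way to sidestep the missing Fatou property, and it also delivers $\lVert \mathbf x^{(k)}-\mathbf x\rVert_B=\lim_l\lVert \mathbf x^{(k)}-\mathbf x^{(l)}\rVert_B$, which the Cauchy hypothesis then makes small.

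One small remark: at the two places where you multiply by $\lVert\chi_{\{n\}}\rVert_B$ you are implicitly invoking admissibility, whereas the proposition is stated in the paper for an arbitrary Banach sequence space $B$ (admissibility is introduced just before, but not assumed here). This is harmless in context, since every $B$ used in the paper is admissible; and in fact coordinate evaluation on $B$ is automatically continuous even without admissibility — if $\chi_{\{n\}}\notin B$, then for any $\mathbf s\in B$ with $s_n\ne 0$ the solid property would give $s_n\chi_{\{n\}}\in B$ and hence $\chi_{\{n\}}\in B$, a contradiction, so every element of $B$ vanishes in coordinate $n$ and the estimate is vacuous there. It would be cleaner either to state explicitly that you assume $B$ admissible, or to insert this observation so the argument covers the general case.
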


\subsection{Partial dichotomy} \label{sec: ph}
In this subsection we introduce the concept of partial dichotomy as well as  some related notation.

Let $(A_m)_{m\in \Z}$ be a sequence of bounded linear operators on $X$. For $m, n\in \Z$, set
\[
\cA(m,n)=\begin{cases}
A_{m-1}\cdots A_n & \text{if $m>n$,} \\
\Id & \text{if $m=n$.} 
\end{cases}
\]
We say that the sequence $(A_m)_{m\in \Z}$ admits a \emph{partial exponential dichotomy} if there exist projections $P_n^i$ for $n\in \Z$ and $i\in \{1, 2, 3\}$ satisfying 
\begin{equation}\label{Proj}
P_n^1+P_n^2+P_n^3=\Id, \quad A_nP_n^i=P_{n+1}^i A_n,
\end{equation}
for $n\in \Z$ and $i\in \{1, 2, 3\}$ such that the operator 
\[
A_n \rvert_{\Ima P_n^2} \colon \Ima P_n^2 \to \Ima P_{n+1}^2
\]
is invertible for $n\in \Z$ and there exist constants $D, b, d>0$ such that 
\begin{equation}\label{d1}
\lVert \cA(m,n)P_n^1 \rVert \le De^{-d(m-n)} \quad \text{for $m\ge n$} 
\end{equation}
and
\begin{equation}\label{d2}
\lVert \cA(m, n)P_n^2\rVert \le De^{-b(n-m)} \quad \text{for $m\le n$,}
\end{equation}
where
\[
\cA(m, n)=(\cA(n,m)\rvert_{\Ima P_m})^{-1} \colon \Ima P_n^2 \to \Ima P_m^2,
\]
for $m<n$.
\begin{remark}
We note that the classical notion of an exponential dichotomy is a special case of the notion of partial exponential dichotomy and corresponds  to the case when $P_n^3=0$ for $n\in \Z$.
\end{remark}
\begin{remark}
The above introduced notion of a partial exponential dichotomy is inspired by the classical notion of  a partial hyperbolicity introduced by Brin and Pesin~\cite{BP}. However, we stress that in constrast to the notion of the  partial hyperbolicity, we don't require that the rate of the  contraction/expansion of vectors in $\Ima P_n^3$ by the
action of the dynamics forward in time is dominated by the contraction along $\Ima P_n^1$ or by the expansion along $\Ima P_n^2$. In fact, we made no assumption about the asymptotic behaviour of the dynamics along $\Ima P_n^3$.
\end{remark}
Given a partially  dichotomic sequence $(A_m)_{m\in \Z}$,  for $n\in \Z$ we set
\[
E_n^s:=\Ima P_n^1, \ E_n^u:=\Ima P_n^2 \ \text{and} \ E_n^c:=\Ima P_n^3.
\]
Moreover, we also introduce $X_B^{s,u}$ as a subspace of $X_B$ that consists of all $\mathbf x=(x_n)_{n\in \Z}\in X_B$ such that $x_n\in E_n^{s,u}:= E_n^s\oplus E_n^u$ for each $n\in \Z$. Obviously, $X_B^{s,u}$ is closed. Similarly, we consider
\[
X_B^c:=\{\mathbf x=(x_n)_{n\in \Z}\in X_B: \text{$x_n\in E_n^c$ for $n\in \Z$}\}.
\]
Again, $X_B^c$ is a closed subspace of $X_B$. Observe that each $\mathbf x=(x_n)_{n\in \Z} \in X_B$ can be written uniquely as 
\[
\mathbf x=\mathbf x^c+\mathbf x^{s,u},
\]
where $\mathbf x^c\in X_B^c$ and $\mathbf x^{s,u}\in X_B^{s,u}$. Indeed, $\mathbf x^c=(x_n^c)_{n\in \Z}$ and $\mathbf x^{s,u}=(x_n^{s,u})_{n\in \Z}$ are given by
\[
x_n^c=P_n^3 x_n \quad \text{and} \quad  x_n^{s,u}=x_n-x_n^c,  \quad  \text{for $n\in \Z$.}
\]
We also introduce an adapted norm $\lVert \cdot \rVert_B'$ on $X_B$ defined by
\[
\lVert \mathbf x\rVert_B'=\max \{\lVert \mathbf x^c\rVert_B, \lVert \mathbf x^{s,u}\rVert_B \}, \quad \text{for $\mathbf x\in X_B$.}
\]
\begin{lemma}
We have that 
\begin{equation}\label{norms}
\frac{1}{1+2D}\lVert \mathbf x\rVert_B' \le \lVert \mathbf x\rVert_B \le 2\lVert \mathbf x\rVert_B',  \quad \text{for $\mathbf x\in X_B$.}
\end{equation}
Thus, the norms $\lVert \cdot \rVert_B$ and $\lVert \cdot \rVert_B'$ are equivalent. 
\end{lemma}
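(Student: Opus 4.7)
The plan is to establish the two inequalities in \eqref{norms} separately, and the key point is to exploit the two defining properties of a (Banach) sequence space norm: the triangle inequality in $B$ and the monotonicity property that $\lvert s_n \rvert \le \lvert s_n'\rvert$ for all $n$ implies $\lVert \mathbf s \rVert_B \le \lVert \mathbf s' \rVert_B$. Both bounds then reduce to pointwise estimates on $\lVert x_n \rVert$, $\lVert x_n^c \rVert$, and $\lVert x_n^{s,u} \rVert$.

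For the upper bound $\lVert \mathbf x \rVert_B \le 2 \lVert \mathbf x \rVert_B'$, I would simply write $x_n = x_n^c + x_n^{s,u}$, so that $\lVert x_n \rVert \le \lVert x_n^c \rVert + \lVert x_n^{s,u} \rVert$ for each $n \in \Z$. By monotonicity and the triangle inequality in $B$, this pointwise domination lifts to $\lVert \mathbf x \rVert_B \le \lVert \mathbf x^c \rVert_B + \lVert \mathbf x^{s,u} \rVert_B$, which is clearly at most $2 \lVert \mathbf x \rVert_B'$ by the definition of $\lVert \cdot \rVert_B'$.

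For the lower bound, the main observation I would use is that setting $m=n$ in \eqref{d1} and \eqref{d2} gives $\lVert P_n^1 \rVert \le D$ and $\lVert P_n^2 \rVert \le D$ for all $n \in \Z$, since $\cA(n,n) = \Id$. Therefore $\lVert x_n^{s,u} \rVert = \lVert (P_n^1 + P_n^2)x_n \rVert \le 2D \lVert x_n \rVert$, and then $\lVert x_n^c \rVert = \lVert x_n - x_n^{s,u} \rVert \le (1 + 2D) \lVert x_n \rVert$. Applying the monotonicity of $\lVert \cdot \rVert_B$ to each of these pointwise inequalities yields $\lVert \mathbf x^{s,u} \rVert_B \le 2D \lVert \mathbf x \rVert_B$ and $\lVert \mathbf x^c \rVert_B \le (1+2D)\lVert \mathbf x \rVert_B$. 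Taking the maximum gives $\lVert \mathbf x \rVert_B' \le (1+2D)\lVert \mathbf x \rVert_B$, which is the desired estimate.

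I don't anticipate a real obstacle here: the only nontrivial input is the uniform bound $D$ on the norms of the hyperbolic projections, which comes for free from the partial dichotomy inequalities evaluated at $m=n$. Equivalence of the two norms, and hence the fact that $(X_B, \lVert \cdot \rVert_B')$ is also a Banach space, follows at once.
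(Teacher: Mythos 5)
Your proof is correct and follows essentially the same route as the paper: the upper bound by the triangle inequality in $B$, the lower bound via the pointwise estimates $\lVert x_n^{s,u}\rVert \le 2D\lVert x_n\rVert$ and $\lVert x_n^c\rVert \le (1+2D)\lVert x_n\rVert$ coming from $\lVert P_n^1\rVert, \lVert P_n^2\rVert \le D$, lifted to $X_B$ by monotonicity of the sequence-space norm. The only cosmetic difference is that you bound $\lVert x_n^c\rVert$ directly via $x_n^c = x_n - x_n^{s,u}$, while the paper first notes $\lVert P_n^3\rVert \le 1+2D$; these are the same estimate.
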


\begin{proof}
Observe that
\[
\lVert \mathbf x\rVert_B=\lVert \mathbf x^c+\mathbf x^{s,u}\rVert_B \le \lVert \mathbf x^c\rVert_B+\lVert \mathbf x^{s,u}\rVert_B \le 2\lVert \mathbf x\rVert_B'.
\]
On the other hand, we first note that it follows from~\eqref{d1} and~\eqref{d2} that $\lVert P_n^i\rVert \le D$ for $n\in \Z$ and $i=1,2$. Thus, \[
\lVert P_n^3\rVert =\lVert \Id-P_n^1-P_n^2\rVert \le 1+2D \quad \text{for $n\in \Z$,}
\]
and consequently 
\[
\lVert x_n^c\rVert \le (1+2D)\lVert x_n\rVert \quad \text{and} \quad \lVert x_n^{s,u}\rVert \le 2D \lVert x_n \rVert,
\]
for $n\in \Z$ and $\mathbf x=(x_n)_{n\in \Z}\in X_B$. It follows that 
\[
\lVert \mathbf x^c\rVert_B \le (1+2D)\lVert \mathbf x\rVert_B \quad \text{and} \quad \lVert \mathbf x^{s,u}\rVert_B \le 2D \lVert \mathbf x\rVert_B.
\]
Therefore 
\[
\lVert \mathbf x\rVert_B' \le (1+2D)\lVert \mathbf x\rVert_B,
\]
and the proof of the lemma is completed. 
\end{proof}

\section{Main result}\label{MR}
\subsection{Setup} \label{sec: setup}
Let $B$ be an admissible Banach sequence space,  $X$  a  Banach space and $(A_m)_{m\in \Z}$ a sequence of  bounded linear operators on $X$ that admits a partial  exponential dichotomy.
Furthermore, let $f_n \colon X\to X$, $n\in \Z$, be a sequence of  maps such that there exists $c>0$ satisfying
\begin{equation}\label{fg}
\lVert f_n(x)-f_n(y)\rVert \le c\lVert x-y\rVert,
\end{equation}
for each $n\in \Z$ and $x, y\in X$.

We consider a nonautonomous and nonlinear dynamics given by 
\begin{equation}\label{nnd}
x_{n+1}=F_n(x_n),  \quad n\in \Z,
\end{equation}
where \[F_n:=A_n+f_n.\]

Let us now recall some notation introduced in~\cite{BD19}.
Given $\delta >0$, a sequence $(y_n)_{n\in \Z} \subset X$ is said to be an $(\delta, B)$-\emph{pseudotrajectory} for~\eqref{nnd} if  
$(y_{n+1}-F_n(y_n))_{n\in \Z}\in X_B$ and 
\begin{equation}\label{pseudo1}
\lVert (y_{n+1}-F_n(y_n))_{n\in \Z} \rVert_B \le \delta. 
\end{equation}
\begin{remark}
When $B=l^\infty$ (see Example~\ref{ex1}), condition~\eqref{pseudo1} reduces to 
\[
\sup_{n\in \Z}  \lVert y_{n+1}-F_n(y_n) \rVert \le \delta.
\]
The above requirement represents a usual definition of a pseudotrajectory in the context of smooth dynamics (see~\cite{Pal00, Pil99}). 
\end{remark}

We say that~\eqref{nnd} has an \emph{$B$-quasi-shadowing property} if for every $\varepsilon>0$ there exists $\delta >0$ so that for every $(\delta, B)$-pseudotrajectory $(y_n)_{n\in \Z}$, there is a sequence  $\mathbf z=(z_n)_{n\in \Z}\in X_B$ with $\lVert \mathbf z\rVert_B' \le \varepsilon$ such that for every $n\in \Z$,
\begin{equation}\label{pseudo}
x_{n+1}=F_n(x_n) + z^c_{n+1},
\end{equation}
where $x_n=y_n+z_n^{s,u}$. Observe that, in particular, $x_{n+1}$ is obtained from $F_n(x_n)$ by moving it by a factor smaller than $\varepsilon$ in the central direction and, moreover, $\|(x_n)_{n\in \Z}-(y_n)_{n\in \Z}\|_B\leq \varepsilon$. Informally, $(x_n)_{n\in \Z}$ is a ``quasi-trajectory" of \eqref{nnd} that ``shadows" $(y_n)_{n\in \Z}$. Furthermore, if there exists $L>0$ such that $\delta$ can be chosen as $\delta=L\epsilon$, we say that~\eqref{nnd} has the \emph{$B$-Lipschitz quasi-shadowing property}.

\subsection{Quasi-shadowing for perturbations of partial dichotomic sequences}
We start with an auxiliary result which is a straightforward consequence of Theorems 1 and 2 of \cite{BD}.
\begin{theorem}\label{admph}
Assume that a sequence $(A_m)_{m\in \Z}$ is partially dichotomic and let $B$ be an arbitrary admissible Banach sequence space. Then,  there exists an operator $\mathbb A^{s,u}\colon X_B^{s,u}\to X_B^{s,u}$ with the property that for $\mathbf x=(x_n)_{n\in \Z}, \mathbf y=(y_n)_{n\in \Z}\in X_B^{s,u}$, the following  properties are equivalent:
\begin{enumerate}
\item $\mathbb A^{s,u}\mathbf y=\mathbf x$;
\item for each $n\in \Z$,
\[
x_n-A_{n-1}x_{n-1}=y_n.
\]
\end{enumerate}
   In fact, $\mathbb A^{s,u}$ is given by 
\[
(\mathbb A^{s,u} \mathbf y)_n=\sum_{m=-\infty}^n \cA(n,m)P_m^1y_m-\sum_{m=n+1}^\infty \cA(n, m)P_m^2y_m, 
\]
for $n\in \Z$ and $\mathbf y=(y_n)_{n\in \Z}\in X_B^{s,u}$.
\end{theorem}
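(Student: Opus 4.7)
My plan is to reduce the statement to the standard (non-partial) exponential dichotomy setting, where Theorems 1 and 2 of \cite{BD} directly apply. The key observation is that the restriction of $A_n$ to the complementary direction $E_n^{s,u}=E_n^s\oplus E_n^u$ is genuinely hyperbolic: by~\eqref{Proj} this direction is $(A_n)$-invariant, the projections $P_n^1, P_n^2$ restricted to $E_n^{s,u}$ satisfy $P_n^1+P_n^2=\Id\rvert_{E_n^{s,u}}$, and the bounds~\eqref{d1}--\eqref{d2} are exactly the classical exponential dichotomy estimates on this subspace. So on $X_B^{s,u}$ we are in the purely hyperbolic framework treated in \cite{BD}.

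Given $\mathbf y=(y_n)_{n\in\Z}\in X_B^{s,u}$, I would decompose $y_n=P_n^1 y_n+P_n^2 y_n$ and solve the difference equation $x_n-A_{n-1}x_{n-1}=y_n$ separately on the stable and unstable components. On the stable component, Theorem 1 of \cite{BD} provides a bounded operator on the corresponding sequence space whose action is given by the convolution with the forward evolution; its formula on $\Ima P_m^1$ is precisely
\[
(\mathbb A^{s,u}\mathbf y)_n^s=\sum_{m=-\infty}^{n}\cA(n,m)P_m^1 y_m.
\]
Analogously, on the unstable component, Theorem 2 of \cite{BD} yields a bounded operator whose value is
\[
(\mathbb A^{s,u}\mathbf y)_n^u=-\sum_{m=n+1}^{\infty}\cA(n,m)P_m^2 y_m,
\]
where $\cA(n,m)$ for $m>n$ is the inverse along the unstable subspace defined in Section~\ref{sec: ph}. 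Summing the two contributions yields the formula claimed in the theorem.

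Two things then need checking. First, boundedness of $\mathbb A^{s,u}$ on $X_B^{s,u}$: the series defining each $(\mathbb A^{s,u}\mathbf y)_n$ converge absolutely in $X$ because of~\eqref{d1}--\eqref{d2}, and to see that the resulting sequence actually lies in $X_B^{s,u}$ with an $X_B$-norm controlled by $\lVert \mathbf y\rVert_B$ one uses the admissibility of $B$: translation invariance of $\lVert\cdot\rVert_B$ together with the exponential decay factors allows one to dominate the series by a geometric convolution that is summable in $B$. This is exactly the mechanism underlying Theorems 1 and 2 of \cite{BD}, so no new analytic work is required.

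Second, I would verify the equivalence $(1)\Leftrightarrow(2)$ by direct substitution. Splitting $x_n$ into its stable and unstable parts and applying $A_{n-1}$ termwise, the invariance $A_{n-1}\cA(n-1,m)=\cA(n,m)$ for $m\le n-1$ on $E^s$, the analogous identity on $E^u$ for $m\ge n+1$, and telescoping the boundary terms at $m=n$ and $m=n+1$ give $(\mathbb A^{s,u}\mathbf y)_n-A_{n-1}(\mathbb A^{s,u}\mathbf y)_{n-1}=P_n^1 y_n+P_n^2 y_n=y_n$, using that $\mathbf y\in X_B^{s,u}$ makes $P_n^3 y_n=0$. For the converse, one subtracts two solutions to obtain a sequence $\mathbf w\in X_B^{s,u}$ satisfying $w_n=A_{n-1}w_{n-1}$; iterating this relation together with the dichotomy estimates forces $\mathbf w=0$. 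The main subtlety, inherited from \cite{BD} rather than introduced here, is the convergence and norm bound for the series in a general admissible $B$, which is where the admissibility axioms are essential.
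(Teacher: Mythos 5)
Your proposal is correct and takes the same route as the paper, which gives no proof at all and simply calls Theorem~\ref{admph} ``a straightforward consequence of Theorems 1 and 2 of \cite{BD}.'' Restricting to the $A_n$-invariant bundle $E_n^{s,u}$, where \eqref{d1}--\eqref{d2} amount to a genuine exponential dichotomy, splitting into stable and unstable parts to invoke the two cited theorems, and then verifying the telescoping identity and the uniqueness step (using $B\hookrightarrow l^\infty$ via the admissibility axioms) is exactly the argument the authors leave implicit.
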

Let us consider the operator $G\colon X_B\to X_B$ given by
\[
G\mathbf x=-\mathbf x^c+\mathbb A^{s,u} \mathbf x^{s,u}.
\]
By $\lVert G\rVert$ we will denote the operator norm of $G$ induced by the norm $\lVert \cdot \rVert_B'$ on $X_B$.

\begin{theorem}\label{theo: main}
Assume that 
\begin{equation}\label{cG}
4cD(1+2D)\lVert G\rVert<1. 
\end{equation}
Then, the  system~\eqref{nnd} has the  $B$-Lipschitz quasi-shadowing property. 
\end{theorem}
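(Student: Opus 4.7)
The plan is to recast the quasi-shadowing requirement as a fixed-point problem on $X_B$ and invoke the Banach contraction principle. Given a $(\delta,B)$-pseudotrajectory $(y_n)_{n\in\Z}$, I would set $\mathbf w=(y_{n+1}-F_n(y_n))_{n\in\Z}\in X_B$, which satisfies $\lVert\mathbf w\rVert_B\le\delta$ by~\eqref{pseudo1}. For a candidate $\mathbf z=(z_n)_{n\in\Z}\in X_B$, I would set $x_n:=y_n+z_n^{s,u}$ and unwind~\eqref{pseudo} using $F_n=A_n+f_n$: the quasi-shadowing condition reduces to the single identity
\[
z_{n+1}^{s,u}-A_nz_n^{s,u}-z_{n+1}^c\;=\;f_n(y_n+z_n^{s,u})-f_n(y_n)-w_n\;=:\;h_n(\mathbf z)
\]
for every $n\in\Z$. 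Projecting via $P_{n+1}^3$ and $P_{n+1}^1+P_{n+1}^2$ decouples this into $z_{n+1}^c=-P_{n+1}^3 h_n(\mathbf z)$ and $z_{n+1}^{s,u}-A_nz_n^{s,u}=(P_{n+1}^1+P_{n+1}^2)h_n(\mathbf z)$.

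Letting $\sigma\colon X_B\to X_B$ denote the shift $(\sigma\mathbf h)_n=h_{n-1}$, which is an isometry on $(X_B,\lVert\cdot\rVert_B)$ by the admissibility of $B$, the centerwise equation becomes $\mathbf z^c=-(\sigma\mathbf h(\mathbf z))^c$, while Theorem~\ref{admph} identifies the $(s,u)$-equation with $\mathbf z^{s,u}=\mathbb A^{s,u}((\sigma\mathbf h(\mathbf z))^{s,u})$. Summing and invoking the definition of $G$, both relations collapse into the single fixed-point problem
\[
\mathbf z\;=\;G\bigl(\sigma\,\mathbf h(\mathbf z)\bigr)\;=:\;\mathcal T\mathbf z.
\]
To show that $\mathcal T$ contracts on $(X_B,\lVert\cdot\rVert_B')$, I would chain: the operator norm $\lVert G\rVert$; the inequality $\lVert\cdot\rVert_B'\le(1+2D)\lVert\cdot\rVert_B$ from~\eqref{norms}; the isometry of $\sigma$ on $(X_B,\lVert\cdot\rVert_B)$; the componentwise Lipschitz bound~\eqref{fg}; the estimate $\lVert z^{s,u}\rVert\le 2D\lVert z\rVert$ extracted from the proof of~\eqref{norms}; and finally $\lVert\cdot\rVert_B\le 2\lVert\cdot\rVert_B'$. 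After bookkeeping, the Lipschitz constant of $\mathcal T$ is exactly $q:=4cD(1+2D)\lVert G\rVert$, which is strictly less than $1$ by hypothesis~\eqref{cG}.

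The Banach fixed-point theorem then produces a unique $\mathbf z\in X_B$ with $\mathcal T\mathbf z=\mathbf z$, and by construction the pair $x_n=y_n+z_n^{s,u}$ together with $z_{n+1}^c$ realises~\eqref{pseudo}. To control its size, I would evaluate at $\mathbf 0$: since $\mathbf h(\mathbf 0)=-\mathbf w$, the same chain of norm comparisons gives $\lVert\mathcal T(\mathbf 0)\rVert_B'\le(1+2D)\lVert G\rVert\delta$, and the standard contraction estimate yields $\lVert\mathbf z\rVert_B'\le L\delta$ with $L:=(1+2D)\lVert G\rVert/(1-q)$. Setting $\delta=\varepsilon/L$ delivers the $B$-Lipschitz quasi-shadowing property. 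The only genuinely delicate step is the bookkeeping between $\lVert\cdot\rVert_B$ and $\lVert\cdot\rVert_B'$ required to produce exactly the constant $4cD(1+2D)\lVert G\rVert$ appearing in~\eqref{cG}; the rest is a routine application of the contraction mapping principle.
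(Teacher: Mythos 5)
Your proof is correct and follows essentially the same route as the paper: the operator $\mathcal T=G\circ\sigma\circ\mathbf h$ you construct is exactly the operator $\Phi=G\circ S$ from the paper (since $(S(\mathbf z))_n=g_{n-1}(z_{n-1}^{s,u})=h_{n-1}(\mathbf z)$), the contraction constant is obtained by the same chain of norm comparisons, and the radius bound $\lVert\mathbf z\rVert_B'\le\varepsilon$ comes from the same contraction-mapping estimate. The only cosmetic difference is that you derive the fixed-point equation by unwinding~\eqref{pseudo} and decoupling via the projections, whereas the paper writes down $\Phi$ first and then verifies that its fixed point satisfies~\eqref{pseudo}; that verification direction is what the paper actually needs, and your bidirectional derivation also yields it.
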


\begin{proof} 
Set
\[
(S (\mathbf x))_n=g_{n-1}(x_{n-1}^{s,u}), 
\]
for $n\in \Z$ and $\mathbf x=(x_n)_{n\in \Z}\in X_B$, where $g_n:X\to X$ is given by
$$g_n(x)=f_n(x+y_n)-f_n(y_n)+F_n(y_n)-y_{n+1}.$$
Furthermore, let 
\[
\Phi (\mathbf x)=GS(\mathbf x), \quad \mathbf x\in X_B.
\]
It follows from~\eqref{fg} that for $\mathbf x=(x_n)_{n\in \Z}$ and $\mathbf z=(z_n)_{n\in \Z}$ in $X_B$ and $n\in \Z$, we have 
\[
\begin{split}
 \lVert f_n(y_n+x_n^{s,u})-f_n(y_n+z_n^{s,u})\rVert 
&\le c \lVert x_n^{s,u}-z_n^{s,u}\rVert \\
&=c\lVert (P_n^1+P_n^2)(x_n-z_n)\rVert \\
&\le 2cD \lVert x_n-z_n\rVert,
\end{split}
\]
and hence
\[
\lVert S(\mathbf x)-S(\mathbf z)\rVert_B \le 2cD \lVert \mathbf x-\mathbf z\rVert_B.
\]
Consequently, \eqref{norms} implies that 
\[
\lVert S(\mathbf x)-S(\mathbf z)\rVert_B' \le 4cD(1+2D) \lVert \mathbf x-\mathbf z\rVert_B',
\]
and therefore
\begin{equation}\label{contraction}
\lVert \Phi (\mathbf x)-\Phi(\mathbf z)\rVert_B' \le 4cD(1+2D) \lVert G\rVert \cdot \lVert \mathbf x-\mathbf z\rVert_B', \quad \text{for $\mathbf x, \mathbf z\in X_B$.}
\end{equation}
On the other hand, observe that 
\[
\lVert S(\mathbf 0)\rVert_B' \le (1+2D)\lVert S(\mathbf 0)\rVert_B\le (1+2D)\delta,
\]
and therefore using~\eqref{contraction} we have that 
\[
\begin{split}
\lVert \Phi (\mathbf x)\rVert_B' &\le \lVert \Phi(\mathbf 0)\rVert_B'+\lVert \Phi(\mathbf x)-\Phi(\mathbf 0)\rVert_B' \\
&\le (1+2D)\delta \lVert G\rVert+4cD(1+2D) \lVert G\rVert \cdot \lVert \mathbf x\rVert_B'.
\end{split}
\]
We conclude that by setting 
\[
\delta=\frac{1-4cD(1+2D) \lVert G\rVert}{(1+2D) \lVert G\rVert}\epsilon
\]
and
\[
 D(\mathbf 0, \epsilon):=\{\mathbf x\in X_B: \lVert \mathbf x\rVert_B'\le \epsilon \},
\]
we have that \[\Phi ( D(\mathbf 0, \epsilon))\subset  D(\mathbf 0, \epsilon).\] This together with~\eqref{cG} and~\eqref{contraction} implies that $\Phi$ is a contraction on $ D(\mathbf 0, \epsilon)$. 
Hence, there exist a unique $\mathbf z\in D(\mathbf 0, \epsilon)$ such that $\Phi(\mathbf z)=\mathbf z$, that is, $GS(\mathbf z)=\mathbf z$.  Letting $\mathbf{w}=S(\mathbf z)$, we have that 
\[
-\mathbf {w}^c+\mathbb A^{s,u}\mathbf{w}^{s,u}=\mathbf z. 
\]
This in particular implies that $-\mathbf{w}^c=\mathbf z^c$. Moreover, Theorem~\ref{admph} implies that for each $n\in \Z$,
\[
\begin{split}
z_n+w_n^c-A_{n-1}(z_{n-1}+w_{n-1}^c) &=z_n-z_n^c-A_{n-1}(z_{n-1}-z_{n-1}^c) \\
&=w_n^{s,u} \\
&=w_n-w_n^c \\
&=w_n+z_n^c,
\end{split}
\]
which easily implies that 
\begin{equation}\label{1134}
y_n+z_n^{s,u}=z_n^c+F_{n-1}(y_{n-1}+z_{n-1}^{s,u}),
\end{equation}
for each $n\in \Z$. We define $\mathbf x=(x_n)_{n\in \Z}$  by \[x_n=y_n+z_n^{s,u},\quad  n\in \Z.\] Then, it follows from~\eqref{1134} that~\eqref{pseudo} holds. Consequently, since $\lVert \mathbf z\rVert_B'\le \epsilon$, we conclude that~\eqref{nnd} has the  $B$-Lipschitz quasi-shadowing property. 
\end{proof}
In the sequel, we will also need the following lemma. 
\begin{lemma}\label{lemma: aux unique}
Assume that~\eqref{cG} holds and let $L>0$ be the constant given by the $B$-Lipschitz quasi-shadowing property. Given $\varepsilon>0$, take $\delta=L\varepsilon$ and fix a $(\delta, B)$-pseudotrajectory $(y_n)_{n\in \Z}$ of \eqref{nnd}. Suppose that $\mathbf z=(z_n)_{n\in \Z}\in X_B$ is a sequence with $\lVert \mathbf z\rVert_B' \le \varepsilon$ which satisfies
\begin{displaymath}
x_{n+1}=F_n(x_n) + z^c_{n+1},
\end{displaymath}
for every $n\in \Z$, where $x_n=y_n+z_n^{s,u}$. Then, $\mathbf z=(z_n)_{n\in \Z}$ is a fixed point of the operator $\Phi$ introduced in the proof of Theorem \ref{theo: main}.
\end{lemma}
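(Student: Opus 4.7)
The plan is to unwind the definitions and show directly that the given sequence $\mathbf z$ satisfies $GS(\mathbf z)=\mathbf z$. First, I would rewrite the hypothesis $x_{n+1}=F_n(x_n)+z_{n+1}^c$ with $x_n=y_n+z_n^{s,u}$ using $F_n=A_n+f_n$ and the trivial identity $f_n(y_n+z_n^{s,u})=f_n(y_n)+\bigl[f_n(y_n+z_n^{s,u})-f_n(y_n)\bigr]$. This gives, after rearranging and subtracting $A_n y_n$ from both sides, the relation
\[
z_{n+1}^{s,u}-A_n z_n^{s,u}-z_{n+1}^c=f_n(y_n+z_n^{s,u})-f_n(y_n)+F_n(y_n)-y_{n+1}=g_n(z_n^{s,u})=(S(\mathbf z))_{n+1},
\]
for every $n\in \Z$. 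Here I use that $g_n$ was defined exactly as this combination in the proof of Theorem~\ref{theo: main}.

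Next, set $\mathbf w:=S(\mathbf z)$. Before going further I need to verify $\mathbf w\in X_B$ so that the decomposition $\mathbf w=\mathbf w^c+\mathbf w^{s,u}$ and Theorem~\ref{admph} are applicable. Using \eqref{fg}, the bound $\lVert z_{n-1}^{s,u}\rVert\le 2D\lVert z_{n-1}\rVert$ obtained inside the proof of Theorem~\ref{theo: main}, together with the fact that $(y_n)_{n\in \Z}$ is an $(\delta,B)$-pseudotrajectory, yield $\lVert w_n\rVert\le 2cD\lVert z_{n-1}\rVert+\lVert F_{n-1}(y_{n-1})-y_n\rVert$. Since the admissibility of $B$ makes it closed under shifts, this bound together with $\mathbf z\in X_B$ and \eqref{pseudo1} shows $\mathbf w\in X_B$.

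Now I would exploit the invariance of the splitting under $A_n$: because $A_n E_n^{s,u}\subset E_{n+1}^{s,u}$ and $A_n E_n^c\subset E_{n+1}^c$, the left-hand side of the displayed identity decomposes into its components $z_{n+1}^{s,u}-A_n z_n^{s,u}\in E_{n+1}^{s,u}$ and $-z_{n+1}^c\in E_{n+1}^c$, while the right-hand side decomposes as $w_{n+1}^{s,u}+w_{n+1}^c$. Projecting onto $E_{n+1}^c$ and $E_{n+1}^{s,u}$ respectively and using uniqueness of the decomposition gives
\[
w_{n+1}^c=-z_{n+1}^c \qquad\text{and}\qquad z_{n+1}^{s,u}-A_n z_n^{s,u}=w_{n+1}^{s,u},
\]
for every $n\in \Z$. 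The first identity, valid for every $n$, yields $\mathbf w^c=-\mathbf z^c$, i.e.\ $-\mathbf w^c=\mathbf z^c$. The second identity is precisely characterization (2) of Theorem~\ref{admph} applied to $\mathbf w^{s,u},\mathbf z^{s,u}\in X_B^{s,u}$, hence $\mathbb A^{s,u}\mathbf w^{s,u}=\mathbf z^{s,u}$.

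Combining the two conclusions,
\[
GS(\mathbf z)=G\mathbf w=-\mathbf w^c+\mathbb A^{s,u}\mathbf w^{s,u}=\mathbf z^c+\mathbf z^{s,u}=\mathbf z,
\]
which is the desired fixed point equation $\Phi(\mathbf z)=\mathbf z$. The only subtle step, and the main place where one must be careful, is the use of invariance of the splitting $E^{s,u}\oplus E^c$ under $A_n$ to justify splitting the identity into its two components; everything else is essentially a computation matching the definitions of $S$, $G$ and $\mathbb A^{s,u}$.
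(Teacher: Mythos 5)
Your proof is correct and follows essentially the same route as the paper's: both derive the identity $(S(\mathbf z))_n = z_n^{s,u}-z_n^c-A_{n-1}z_{n-1}^{s,u}$ from the hypothesis and then read off the central and stable/unstable components. The only (cosmetic) difference is that the paper expands $\mathbb{A}^{s,u}(S(\mathbf z)^{s,u})$ via its explicit series and telescopes, whereas you invoke the equivalence in Theorem~\ref{admph} directly, which is slightly cleaner.
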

\begin{proof}
We start observing that
\begin{displaymath}
\begin{split}
\Phi(\mathbf z) &=GS(\mathbf z) \\
&=-S(\mathbf z)^c+\mathbb{A}^{s,u}(S(\mathbf z)^{s,u})
\end{split}
\end{displaymath}
and
\begin{displaymath}
\begin{split}
\left( S(\mathbf z) \right)_n &=  g_{n-1}(z_{n-1}^{s,u}) \\
&=f_{n-1}(z_{n-1}^{s,u}+y_{n-1})-f_{n-1}(y_{n-1})+F_{n-1}(y_{n-1})-y_n\\
&=f_{n-1}(x_{n-1})+A_{n-1}y_{n-1}-y_n\\
&=f_{n-1}(x_{n-1})+A_{n-1}x_{n-1}-A_{n-1}z^{s,u}_{n-1}-y_n\\
&=x_n-z_n^c-A_{n-1}z^{s,u}_{n-1}-y_n\\
&=z^{s,u}_n-z_n^c-A_{n-1}z^{s,u}_{n-1}.
\end{split}
\end{displaymath}
Consequently, 
\begin{displaymath}
\begin{split}
\left(-S(\mathbf z)^c \right)_n &=-\left(z^{s,u}_n-z_n^c-A_{n-1}z^{s,u}_{n-1}\right)^c\\
&=z_n^c,
\end{split}
\end{displaymath}
for $n\in \Z$.
Hence, by using Theorem~\ref{admph}, we have that 
\[
\begin{split}
&\left(\mathbb{A}^{s,u}(S(\mathbf z)^{s,u})\right)_n  \\
&=\sum_{m=-\infty}^n \cA(n,m)P_m^1(S(\mathbf z)^{s,u})_m-\sum_{m=n+1}^\infty \cA(n, m)P_m^2(S(\mathbf z)^{s,u})_m \\
&=\sum_{m=-\infty}^n \cA(n,m)(z^{s}_m-A_{m-1}z^{s}_{m-1})-\sum_{m=n+1}^\infty \cA(n, m)(z^{u}_m-A_{m-1}z^{u}_{m-1})\\
&= z_n^s+z^u_n\\
&=z_n^{s,u},
\end{split}
\]
for $n\in \Z$.
Therefore, 
$
\Phi(\mathbf z) = \mathbf z
$
and the proof of the lemma is completed.
\end{proof}

\begin{corollary}\label{Cor}
Suppose that~\eqref{cG} holds. Then, the sequence $\mathbf z=(z_n)_{n\in \Z}$ given by the quasi-shadowing property is unique.
\end{corollary}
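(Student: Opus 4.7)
The plan is to combine Lemma~\ref{lemma: aux unique} with the contraction property of $\Phi$ established in the proof of Theorem~\ref{theo: main}. Fix $\varepsilon>0$, let $\delta=L\varepsilon$ as in the quasi-shadowing property, and fix a $(\delta, B)$-pseudotrajectory $(y_n)_{n\in \Z}$. Suppose $\mathbf z, \mathbf z'\in X_B$ are two sequences satisfying $\lVert \mathbf z\rVert_B', \lVert \mathbf z'\rVert_B' \le \varepsilon$ and the relation~\eqref{pseudo} (with $x_n=y_n+z_n^{s,u}$ and $x_n'=y_n+(z_n')^{s,u}$ respectively).

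By Lemma~\ref{lemma: aux unique}, both $\mathbf z$ and $\mathbf z'$ are fixed points of the operator $\Phi$ defined in the proof of Theorem~\ref{theo: main}. However,~\eqref{contraction} together with the assumption~\eqref{cG} shows that $\Phi$ is a global contraction on $(X_B, \lVert \cdot \rVert_B')$, since its Lipschitz constant $4cD(1+2D)\lVert G\rVert$ is strictly less than $1$. By the Banach fixed point theorem, $\Phi$ admits at most one fixed point in the whole space $X_B$, and in particular $\mathbf z=\mathbf z'$. This establishes uniqueness.

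The main (very mild) subtlety is to recognize that Lemma~\ref{lemma: aux unique} does all the heavy lifting: it guarantees that \emph{any} sequence $\mathbf z$ arising from the quasi-shadowing property satisfies the fixed point equation $\Phi(\mathbf z)=\mathbf z$, without imposing any further constraint beyond $\lVert \mathbf z\rVert_B'\le \varepsilon$. Once this is granted, uniqueness is immediate from the contraction estimate~\eqref{contraction}, which in fact holds on all of $X_B$ and not merely on the closed ball $D(\mathbf 0, \epsilon)$ used to produce existence. Hence no additional argument is needed, and the corollary reduces to an application of the standard uniqueness part of the Banach fixed point theorem.
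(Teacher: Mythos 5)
Your argument is correct and follows essentially the same route as the paper: invoke Lemma~\ref{lemma: aux unique} to show that any sequence $\mathbf z$ arising from the quasi-shadowing property is a fixed point of $\Phi$, and then conclude uniqueness from the contraction estimate~\eqref{contraction}. The only (inessential) difference is that you observe the contraction holds globally on $X_B$, whereas the paper restricts attention to the ball $D(\mathbf 0,\epsilon)$; both give the same conclusion since any such $\mathbf z$ necessarily lies in that ball.
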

\begin{proof}
From Lemma \ref{lemma: aux unique} we know that any sequence $\mathbf z=(z_n)_{n\in \Z}$ given by the quasi-shadowing property is a fixed point of the operator $\Phi$. Consequently, since $\Phi$ is a contraction on $D(\mathbf 0, \epsilon)=\{\mathbf x\in X_B: \lVert \mathbf x\rVert_B'\le \epsilon \}$  its fixed point is unique in $D(\mathbf 0, \epsilon)$, and  the desired conclusion follows. 
\end{proof}

In the case when the sequence $(A_m)_{m\in \Z}$ admits an exponential dichotomy we can say more. More precisely, we have the following result first established in~\cite[Theorem 4.]{BD}.

\begin{corollary}
Assume that the sequence $(A_m)_{m\in \Z}$ admits an exponential dichotomy and that~\eqref{cG} holds. Then, there exists $L>0$ with the property that for each $\varepsilon >0$ and every $(\delta, B)$-pseudotrajectory $\mathbf y=(y_n)_{n\in \Z}$ with $\delta=L\varepsilon$, there exists a solution 
$\mathbf x=(x_n)_{n\in \Z}$ of~\eqref{nnd} such that $\lVert \mathbf x-\mathbf y\rVert_B' \le \epsilon$. Moreover, $\mathbf x$ is unique. 
\end{corollary}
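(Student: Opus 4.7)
The plan is to deduce this corollary directly from Theorem~\ref{theo: main} and Corollary~\ref{Cor} by exploiting the fact that an exponential dichotomy is the degenerate case of a partial exponential dichotomy in which the central bundle is trivial.

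First, I would record the structural simplification: an exponential dichotomy corresponds to $P_n^3=0$ for every $n\in\Z$, so $E_n^c=\{0\}$ and therefore $X_B^c=\{\mathbf 0\}$. Consequently, for any $\mathbf z=(z_n)_{n\in\Z}\in X_B$ one has $\mathbf z^c=\mathbf 0$ and $\mathbf z=\mathbf z^{s,u}$, whence $\lVert \mathbf z\rVert_B'=\lVert \mathbf z\rVert_B$. In particular, condition~\eqref{cG} is exactly the hypothesis required to invoke Theorem~\ref{theo: main}.

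Next, fix $\varepsilon>0$ and let $L>0$ be the Lipschitz constant produced by Theorem~\ref{theo: main}. Given any $(\delta,B)$-pseudotrajectory $\mathbf y=(y_n)_{n\in\Z}$ with $\delta=L\varepsilon$, Theorem~\ref{theo: main} yields $\mathbf z\in X_B$ with $\lVert \mathbf z\rVert_B'\le\varepsilon$ such that, setting $x_n=y_n+z_n^{s,u}$, one has
\[
x_{n+1}=F_n(x_n)+z_{n+1}^c,\qquad n\in\Z.
\]
Since $z_{n+1}^c=0$ and $z_n^{s,u}=z_n$ in the present setting, this collapses to $x_{n+1}=F_n(x_n)$ for all $n$, so $\mathbf x=(x_n)_{n\in\Z}$ is a genuine solution of~\eqref{nnd}. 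Moreover $\mathbf x-\mathbf y=\mathbf z$, and the identification $\lVert \mathbf z\rVert_B'=\lVert \mathbf z\rVert_B$ noted above gives $\lVert \mathbf x-\mathbf y\rVert_B'\le\varepsilon$.

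Finally, for uniqueness I would appeal to Corollary~\ref{Cor}: any other true solution $\mathbf x'$ of~\eqref{nnd} with $\lVert \mathbf x'-\mathbf y\rVert_B'\le\varepsilon$ gives rise, via $\tilde{\mathbf z}:=\mathbf x'-\mathbf y$, to a sequence in $X_B$ with $\lVert \tilde{\mathbf z}\rVert_B'\le\varepsilon$; writing the recursion $x'_{n+1}=F_n(x'_n)$ as $x'_{n+1}=F_n(x'_n)+\tilde z^c_{n+1}$ (trivially, since $\tilde z^c=\mathbf 0$) shows that $\tilde{\mathbf z}$ satisfies the defining relation of the quasi-shadowing property, so $\tilde{\mathbf z}=\mathbf z$ by Corollary~\ref{Cor}, whence $\mathbf x'=\mathbf x$. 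There is no real obstacle beyond verifying these reductions; the entire content of the corollary is in recognizing that exponential dichotomy makes the central correction $\mathbf z^c$ vanish automatically, turning quasi-shadowing into genuine shadowing.
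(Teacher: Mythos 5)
Your proposal is correct and follows exactly the same route as the paper: specialize Theorem~\ref{theo: main} and Corollary~\ref{Cor} to the case $P_n^3=0$, observe that the central correction $\mathbf z^c$ vanishes so quasi-shadowing becomes genuine shadowing, and read off uniqueness from Corollary~\ref{Cor}. The paper gives this as a one-line deduction; you have simply spelled out the reductions explicitly.
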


\begin{proof}
The desired conclusion follows directly from Theorem~\ref{theo: main} and Corollary~\ref{Cor} taking into account that $E_n^c=\{0\}$ for $n\in \Z$.
\end{proof}

\begin{remark} \label{remark: general approach}
Observe that our general approach allow us to get various versions of the quasi-shadowing property simply by considering different types of Banach sequence spaces. For instance, by taking $B=l^\infty$ as in Example \ref{ex1} we get a quasi-shadowing version of the usual shadowing property. By taking $B=l^p$ as in Example \ref{ex3} we get a quasi-shadowing version of the $l^p$-shadowing property. By taking $B=c_0$ as in Example \ref{ex2} we get a quasi-shadowing version of the asymptotic shadowing property and so on.
\end{remark}

\section{Quasi-Stability of partially dichotomic sequences} \label{sec: quasi-stability}

As an application of our results in this section  we prove that, up to moving it in the central direction, a class of  partially dichotomic sequences of linear operators acting on an arbitrary Banach space is stable under nonlinear perturbations.

We say that a sequence $(A_m)_{m\in \Z}$ of bounded and invertible  linear operators on $X$ admits a \emph{strong partial exponential dichotomy} if there exist projections $P_n^i$ for $n\in \Z$ and $i\in \{1, 2, 3\}$ satisfying~\eqref{Proj} and there exist constants 
\[D>0, \quad 0\le a<b, \quad \text{and} \quad 0\le c<d, 
\]
such that~\eqref{d1} and \eqref{d2} hold and, in addition, 
\begin{equation}\label{d3}
\lVert \cA(m,n)P_n^3\rVert \le De^{a(m-n)}  \quad \text{for $m\ge n$,}
\end{equation}
and
\[
 \lVert \cA(m,n)P_n^3\rVert \le De^{c(n-m)} \quad \text{for $m\le n$,}
\]
where
\[
\cA(m, n)=(\cA(n,m))^{-1}, \quad \text{for $m<n$.}
\]

Let $(A_m)_{m\in \Z}$ be a sequence of bounded and invertible  linear operators on $X$ that admits a strong partial exponential dichotomy. Furthermore, assume that  \[\sup_{m\in \Z}\{\lVert A_m\rVert,\lVert A_m^{-1}\rVert \} <\infty.\] Associated to these parameters by Theorem \ref{theo: main} (applied to $B=l^\infty$ and $f_n\equiv 0$), consider $\varepsilon>0$ sufficiently small and $\delta=L\varepsilon>0$. Let $(f_n)_{n\in \Z}$ be a sequence of maps $f_n\colon X\to X$ satisfying \eqref{fg} with $c$ sufficiently small and such that
\[\lVert f_n\rVert_{\sup} \le \delta \quad  \text{for each $n\in \Z$,} \] 
where for  a map $g\colon X\to X$ we set
\[
\lVert g\rVert_{\sup}:=\sup \{\lVert g(x)\rVert: x\in X\}.
\]
We consider the difference equation
\begin{equation}\label{eq: rec GH}
y_{n+1}=F_n(y_n), \quad n\in \Z,
\end{equation}
where $F_n:=A_n+f_n$. By decreasing $c$ (if necessary), we have that $F_n$ is a homeomorphism for each $n\in \Z$ (see~\cite{BD19}). Then, we have the following result. 

\begin{theorem}\label{theo: GH} 
There are continuous maps $h_m \colon X \to X$ and $\tau_m:X\to E_m^c$, $m\in \Z$, such that for each $m\in \Z$,
\begin{equation}\label{eq:gh1}
h_{m+1}\circ F_m=A_m \circ h_m +\tau_{m+1}\circ F_m
\end{equation}
with
\begin{equation}\label{eq:gh2}
\lVert h_m -\Id\rVert_{\sup}\leq \epsilon \text{ and } \lVert \tau_m\rVert_{\sup}\leq \epsilon.
\end{equation}
Moreover, $h_m(x)-x\in E^{s,u}_m$ for all $m\in \mathbb{Z}$ and $x\in X$.
\end{theorem}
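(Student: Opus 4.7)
The plan is to apply the quasi-shadowing property of Theorem~\ref{theo: main} to every complete orbit of the nonlinear system $(F_n)$, regarded as a pseudotrajectory of the underlying linear dynamics $(A_n)$. For each $(m,x)\in \Z\times X$, use the fact that every $F_n$ is a homeomorphism to build the unique two-sided orbit $(y_n)_{n\in\Z}=(y_n^{m,x})_{n\in\Z}$ of $(F_n)$ with $y_m=x$, $y_{n+1}=F_n(y_n)$ and $y_{n-1}=F_{n-1}^{-1}(y_n)$. Since $y_{n+1}-A_ny_n=f_n(y_n)$ and $\lVert f_n\rVert_{\sup}\le \delta$, this sequence is a $(\delta,l^\infty)$-pseudotrajectory for the linear system $x_{n+1}=A_nx_n$.

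By applying Theorem~\ref{theo: main} to $(A_n)$ with the zero perturbation (so that \eqref{cG} is automatic) together with Corollary~\ref{Cor}, we obtain a unique $\mathbf z=\mathbf z^{m,x}=(z_n)_{n\in\Z}\in X_{l^\infty}$ with $\lVert \mathbf z\rVert'_{l^\infty}\le \varepsilon$ such that $y_{n+1}+z_{n+1}^{s,u}=A_n(y_n+z_n^{s,u})+z_{n+1}^c$ for every $n$. Define
\[
h_m(x):=x+z_m^{s,u},\qquad \tau_m(x):=z_m^c.
\]
Then $h_m(x)-x\in E_m^{s,u}$, $\tau_m(x)\in E_m^c$ and the bounds $\lVert h_m-\Id\rVert_{\sup}\le \varepsilon$, $\lVert \tau_m\rVert_{\sup}\le \varepsilon$ are immediate. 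The conjugacy \eqref{eq:gh1} follows from uniqueness: the orbit of $(F_n)$ based at $(m+1,F_m(x))$ coincides indexwise with the one based at $(m,x)$, so by Corollary~\ref{Cor} we have $\mathbf z^{m+1,F_m(x)}=\mathbf z^{m,x}$. Evaluating the quasi-shadowing identity at $n=m$ yields $F_m(x)+z_{m+1}^{s,u}=A_m(x+z_m^{s,u})+z_{m+1}^c$, which is precisely $h_{m+1}(F_m(x))=A_mh_m(x)+\tau_{m+1}(F_m(x))$.

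For continuity, the essential observation is that the Lipschitz constant in the Theorem~\ref{theo: main} application is zero, so inspecting its proof the operator $S$ becomes a \emph{constant} sequence in $\mathbf x$, namely $S_n=-f_{n-1}(y_{n-1}^{m,x})$. Hence $\mathbf z^{m,x}=GS$, and Theorem~\ref{admph} combined with the invariance $P_n^3\cA(n,k)=\cA(n,k)P_k^3$ gives explicitly
\[
\tau_m(x)=P_m^3 f_{m-1}(F_{m-1}^{-1}(x)),
\]
\[
h_m(x)-x=\sum_{k>m}\cA(m,k)P_k^2 f_{k-1}(y_{k-1}^{m,x})-\sum_{k\le m}\cA(m,k)P_k^1 f_{k-1}(y_{k-1}^{m,x}).
\]
Each summand is continuous in $x$ since $y^{m,x}_{k-1}$ depends continuously on $x$ (as a finite composition of the homeomorphisms $F_n^{\pm1}$) and each $f_{k-1}$ is Lipschitz; both series converge uniformly in $x$ by the Weierstrass $M$-test via the exponential estimates \eqref{d1}--\eqref{d2} and the uniform bound $\lVert f_\cdot\rVert_{\sup}\le \delta$. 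Continuity of $\tau_m$ and $h_m$ then follows.

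The main obstacle I anticipate is the bookkeeping among the three invariant subspaces $E^s,E^u,E^c$, in particular deriving $(G\mathbf w)_n^c=-w_n^c$ cleanly and aligning the index shift when invoking uniqueness of the quasi-shadowing sequence. Notice that no smallness condition on the Lipschitz constant of $f_n$ beyond what ensures that $F_n$ is a homeomorphism is needed, since \eqref{cG} holds trivially in this application.
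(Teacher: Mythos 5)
Your proposal is correct, and the construction of $h_m$ and $\tau_m$, together with the verification of the conjugacy relation \eqref{eq:gh1} via the uniqueness statement of Corollary~\ref{Cor}, follows the same route as the paper. The genuinely different part is the continuity argument. The paper proves continuity of $h_m$ by contradiction: assuming $h_0(w_0^j)\not\to h_0(y_0)$, it propagates a lower bound of the form $\frac{1}{D}e^{bm}\gamma - De^{-dm}C - De^{am}C - mDe^{am}\cdot 2\varepsilon$ forward (or backward) in time, and this crucially uses the extra growth estimates~\eqref{d3} on $\cA(m,n)P_n^3$ from the \emph{strong} partial exponential dichotomy, together with the spectral gap $b>a$. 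You instead note that since the perturbation fed into Theorem~\ref{theo: main} is zero, the operator $S$ is constant in its argument, so $\mathbf z = GS$ has the closed form you wrote; continuity of $h_m$ and $\tau_m$ then follows from continuity of each summand (a finite composition of the homeomorphisms $F_n^{\pm 1}$, the Lipschitz maps $f_{k-1}$, and bounded linear operators) and uniform convergence of the two series via the Weierstrass $M$-test, using only the stable/unstable estimates~\eqref{d1}--\eqref{d2} and the uniform bound $\lVert f_\cdot\rVert_{\sup}\le\delta$. Your argument is shorter and requires weaker hypotheses than the paper's: it does not invoke the central estimates~\eqref{d3} or the condition $a<b$ at all, and it simultaneously produces the explicit formula $\tau_m(x)=P_m^3 f_{m-1}(F_{m-1}^{-1}(x))$, which makes continuity of $\tau_m$ manifest rather than a corollary of that of $h_{m-1}$.
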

Recall that the nonautonomous systems $x_{m+1}=A_mx_m$, $m\in \Z$, and $x_{m+1}=F_m(x_m)$, $m\in \Z$, are said to be \emph{topologically conjugated} if there exists a sequence of homeomorphisms $(h_m)_{m\in \Z}$ such that
$$h_{m+1}\circ F_m=A_m \circ h_m \text{ for every } m\in \Z. $$
Moreover, a system is said to be \emph{stable} if it is topologically conjugated to any small (according to some appropriate topology) perturbation of itself. So, what Theorem \ref{theo: GH} is saying is that, under the previous assumptions, the system $x_{m+1}=A_mx_m$, $m\in \Z$, is \emph{quasi-stable}: it is stable except for a small deviation in the central direction (the ``$\tau_{m+1}\circ F_m$" part in \eqref{eq:gh1}) and the fact that the maps $h_m$ are only continuous.

\begin{proof} 
Let us fix   $m\in \Z$. Given $y\in X$, we consider the sequence $\mathbf y=(y_n)_{n\in \Z}$ given by
$y_n=\mathcal{F}(n, m)y$ for $n\in \Z$, where
\[
\mathcal{F}(n,m)=
\begin{cases}
F_{n-1}\circ \ldots \circ F_m & \text{if $n>m$,}\\
\Id & \text{if $n=m$,}\\
F_n^{-1}\circ \ldots \circ F_{m-1}^{-1} & \text{if $n<m$.}
\end{cases}
\]
Then, $\mathbf y$ is a solution of~\eqref{eq: rec GH}. Moreover,
\begin{displaymath}
\sup_{n\in \Z}\lVert y_{n+1}-A_n y_n\rVert=\sup_{n\in \Z} \lVert f_n(y_n)\rVert \le
\delta.
\end{displaymath} 
In particular, $\mathbf y=(y_n)_{n\in \Z}$ is a $(\delta, l^\infty)$-pseudotrajectory for the difference equation \[x_{n+1}=A_nx_n, \quad  n\in \Z.\] Hence, it follows from Theorem~\ref{theo: main} (applied to the case when $B=l^\infty$ and $f_n\equiv 0$) and Corollary \ref{Cor} that there is a unique sequence $\mathbf z=(z_n)_{n\in \Z}$ with $\|\mathbf z\|'_B\leq \varepsilon$ such that \[x_{n+1}=A_nx_n+z^c_{n+1} \quad  \text{for $n\in \Z$,}\] where $x_n=y_n+z^{s,u}_n$. Set
\[
 h_m(y)=h_m(y_m):=x_m
\]
and  
$$\tau_m(y)=\tau_m(y_m):=z^c_m.$$

It is easy to verify that~\eqref{eq:gh1} holds. Since $\|\mathbf z\|'_B\leq \varepsilon$, we conclude that~\eqref{eq:gh2} holds. Moreover, by definition, $h_m(y)-y=z_m^{s,u}\in E^{s,u}_m$ for every $y\in X$. 

 It remains to show that $h_m$ and $\tau_m$ are continuous maps for every $m\in \Z$. For the sake of simplicity we deal with the case when $m=0$. The argument for $m\neq 0$ is completely analogous. 
Let $\Phi:X_B\to X_B$ be the operator introduced in the proof of Theorem~\ref{theo: main} associated to the sequence $\mathbf y=(y_n)_{n\in \Z}$. We recall that 
\begin{displaymath}
\Phi(\mathbf z)=\mathbf z.
\end{displaymath}
By recalling the definition of $h_m$ and $\tau_m$, the above equality can be rewritten as 
\begin{displaymath}
\Phi((h_m(y_m)-y_m+\tau_m(y_m))_{m\in \Z})=(h_m(y_m)-y_m+\tau_m(y_m))_{m\in \Z}.
\end{displaymath}
Consequently, from the definition of $\Phi$ and Theorem \ref{admph} we obtain that
\[
\begin{split}
&(h_m(y_m)-y_m+\tau_m(y_m))_{m\in \Z} \\
&=\Phi((h_m(y_m)-y_m+\tau_m(y_m))_{m\in \Z})\\
&=G S((h_m(y_m)-y_m+\tau_m(y_m))_{m\in \Z})\\
&=\Bigg( -(g_{m-1}(h_{m-1}(y_{m-1})-y_{m-1}))^c \\
&+ \sum_{k=-\infty}^m \cA(m,k)P_k^1g_{k-1}(h_{k-1}(y_{k-1})-y_{k-1})\\
& -\sum_{k=m+1}^\infty \cA(m, k)P_k^2g_{k-1}(h_{k-1}(y_{k-1})-y_{k-1}) \Bigg)_{m\in \Z} .
\end{split}
\]
In particular, since $h_m(y_m)-y_m\in E^{s,u}_m$ and $\tau_m(y_m)\in E^c_m$, it follows that 
\begin{displaymath}
\tau_m(y_m)=-(g_{m-1}(h_{m-1}(y_{m-1})-y_{m-1}))^c
\end{displaymath}
and consequently, $\tau_m$ is continuous whenever $h_{m-1}$ is. So, all we have to do is to prove that $h_0$ is continuous.

Let $(w_0^j)_{j\in \mathbb{N}}$ be an arbitrary sequence in $X$ converging to $y_0\in X$ with $\|w^j_0-y_0\|<\varepsilon$ for every $j\in \mathbb{N}$. If $h_0(w^j_0)\xrightarrow{j\to \infty}h_0(y_0)$ then we are done. So, let us assume that $h_0(w^j_0)\not\rightarrow h_0(y_0)$. In particular, restricting ourselves to a subsequence, if necessary, we may assume that $\|h_0(w^j_0)- h_0(y_0)\|>4\gamma$ for every $j\in \mathbb{N}$ and some $\gamma >0$. Thus,
\begin{displaymath}
\|(h_0(y_0)-y_0) -(h_0(w^j_0)-w^j_0)\|=\|(h_0(y_0)-h_0(w^j_0)) +(w^j_0-y_0)\|>3\gamma
\end{displaymath}
for every $j\gg 0$. Consequently, recalling that $h_0(y_0)-y_0\in E^{s,u}_0$ and $h_0(w^j_0)-w^j_0\in E^{s,u}_0$ for every $j$, it follows that
\[
\begin{split}
&\|(h_0(y_0)-h_0(w^j_0))^{s,u} +(w^j_0-y_0)^{s,u}\| \\
&= \|\left((h_0(y_0)-h_0(w^j_0)) +(w^j_0-y_0)\right)^{s,u}\| \\
&=\|(h_0(y_0)-h_0(w^j_0)) +(w^j_0-y_0)\| >3\gamma
\end{split}
\]
for every $j\gg 0$ and thus, since $\|w^j_0-y_0\|\to 0$ when $j\to +\infty$, we have that 
\begin{displaymath}
\|(h_0(y_0)-h_0(w^j_0))^{s,u}\|>2\gamma
\end{displaymath}
for every $j\gg 0$. In particular, restricting our selves to a subsequence, if necessary, we may assume that
\begin{displaymath}
\|(h_0(y_0)-h_0(w^j_0))^{s}\| >\gamma \text{ or } \|(h_0(y_0)-h_0(w^j_0))^{u}\| >\gamma
\end{displaymath}
for every $j\in \mathbb{N}$. Suppose we are in the second case.

For every $j\in \mathbb{N}$, let $(w^j_m)_{m\in \Z}$ be the sequence given by $w^j_m=\mathcal{F}(m,0)w^j_0$. From the continuity of $F_m$ it follows that for every $j\in \mathbb{N}$ there exists $N_j\in \mathbb{N}$ so that $\|y_m-w^j_m\|<\varepsilon$ for every $|m|\leq N_j$. Moreover, $N_j$ can be taken so that $N_j\to \infty$ when $j\to \infty$. 

Now, on the one hand, using \eqref{eq:gh1} we have that for every $m\in \mathbb{N}$,
\begin{equation*}
\begin{split}
\|h_m(y_m)-h_m(w^j_m)\|&= \bigg{\|} \mathcal{A}(m,0)(h_0(y_0)-h_0(w^j_0)) +\sum_{i=1}^m\mathcal{A}(m,i)(\tau_i(y_i)-\tau_i(w^j_i)) \bigg{\|}.
\end{split}
\end{equation*}
Consequently, for every $m\in \mathbb{N}$, we have that (using~\eqref{d1}, \eqref{d2} and~\eqref{d3})
\[
\begin{split}
&\|h_m(y_m)-h_m(w^j_m)\| \\
&\geq \| \mathcal{A}(m,0)(h_0(y_0)-h_0(w^j_0))\| - \bigg{\|}\sum_{i=1}^m\mathcal{A}(m,i)(\tau_i(y_i)-\tau_i(w^j_i)) \bigg{\|} \\
&\geq \| \mathcal{A}(m,0)(h_0(y_0)-h_0(w^j_0))^u\| -\| \mathcal{A}(m,0)(h_0(y_0)-h_0(w^j_0))^s\| \\
&-\| \mathcal{A}(m,0)(h_0(y_0)-h_0(w^j_0))^c\|- \bigg{\|}\sum_{i=1}^m\mathcal{A}(m,i)(\tau_i(y_i)-\tau_i(w^j_i)) \bigg{\|} \\
&\geq \frac{1}{D}e^{bm}\|(h_0(y_0)-h_0(w^j_0))^u\| -De^{-dm}\|(h_0(y_0)-h_0(w^j_0))^s\| \\
&-De^{am}\|(h_0(y_0)-h_0(w^j_0))^c\|-\sum_{i=1}^m De^{a(m-i)}\|\tau_i(y_i)-\tau_i(w^j_i)\|. \\
\end{split}
\]
Recalling that $h_0(w^j_0)=w^j_0+(z^j_{0})^{s,u}$ with $\|(z^j_{0})^{s,u}\|\leq\varepsilon$ for every $j\in \mathbb{N}$ and that $(w^j_0)_{j\in \mathbb{N}}$ converges to $y_0$, we conclude that $\|h_0(y_0)-h_0(w^j_0)\|$ is uniformly bounded. In particular, there exists $C>0$ so that $\|(h_0(y_0)-h_0(w^j_0))^s\| <C$ and $\|(h_0(y_0)-h_0(w^j_0))^c\|<C$ for every $j\in \mathbb{N}$. Moreover, since $\|(h_0(y_0)-h_0(w^j_0))^u\|>\gamma$ and $\|\tau_i(y_i)-\tau_i(w^j_i)\|\leq \|\tau_i(y_i)\|+\|\tau_i(w^j_i)\|\le 2\varepsilon$, it follows from the previous expression that
\begin{equation*}
\begin{split}
\|h_m(y_m)-h_m(w^j_m)\| &\geq \frac{1}{D}e^{bm}\gamma -De^{-dm}C-De^{am}C-mDe^{am}2\varepsilon, \\
\end{split}
\end{equation*}
for every $m\in \mathbb{N}$ and $j\in \mathbb{N}$. Thus, since $b>a$, there exists $m_0\in \mathbb{N}$ so that for every $m\geq m_0$, 
\begin{displaymath}
\|h_m(y_m)-h_m(w^j_m)\| \geq 10\varepsilon 
\end{displaymath}
for every $j\in \mathbb{N}$. Fix $j\gg 0$ such that $N_j\geq m_0$.
In particular,
\begin{equation}\label{eq: unbounded h_m}
\|h_{N_j}(y_{N_j})-h_{N_j}(w^j_{N_j})\|\geq 10\varepsilon.
\end{equation}
On the other hand, by the choice of $N_j$ we have that 
\begin{equation*}\label{eq: bound h_m} 
\begin{split}
\|h_m(y_m)-h_m(w^j_m)\|&\leq \|h_m(y_m)-y_m\|+\|y_m-w^j_m\|\\ 
&+\|w^j_m-h_m(w^j_m)\|\\
&\leq \varepsilon +\varepsilon +\varepsilon \\
&=3\varepsilon,
\end{split}
\end{equation*}
for every $|m|\leq N_j$. This together with~\eqref{eq: unbounded h_m} yields a contradiction. The case when $\|(h_0(y_0)-h_0(w^j_0))^{s}\| >\gamma$ for every $j\in \mathbb N$ can be treated analogously by taking backward iterates. Consequently, $h_0(w^j_0)\xrightarrow{j\to \infty}h_0(y_0)$ and $h_0$ is continuous as claimed.
\end{proof}

\begin{remark}
Assuming $X$ is finite dimensional one can easily see that the maps $h_m$ given by the previous theorem are surjective. In fact, in this setting, any bounded continuous perturbation of the identity is surjective. Indeed,  let $h:X\to X$ be a continuous map so that $\|h\|_{sup} \le N$, where $N>0$. We are going to observe that $\Id+h$ is surjective. Given $y\in X$ let us consider the continuous map $H:X\to X$ given by $H(x)=y-h(x)$. Since $h$ is bounded by $N$, it follows that $H$ maps the closed ball of radius $N$ around $y$ into itself. Consequently, by Brouwer's fixed point theorem, $H$ has a fixed point inside that ball. In particular, there is $x\in X$ so that $H(x)=x$ which is equivalent to $x+h(x)=y$ proving that $\Id+h$ is surjective.
\end{remark}

\begin{remark}
In the case when $P_n^3=0$ for $n\in \Z$ (i.e. when $(A_m)_{m\in \Z}$ admits an exponential dichotomy), we have that $\tau_m=0$ and that $h_m$ is a homeomorphism for each $m\in \Z$ (see~\cite[Theorem 5]{BD19}).  Hence, in this setting, Theorem~\ref{theo: GH} essentially reduces to the so-called nonautonomous Grobman-Hartman theorem first established (for finite-dimensional and continuous time dynamics) by Palmer~\cite{P}. For some new results related to nonautonomous linearization devoted to the situations when conjugacies $h_m$ exhibit higher regularity, we refer to~\cite{DZZ1, DZZ2} together with the discussion and references therein. 
\end{remark}

\section{The case of continuous time}

  We consider a nonlinear differential equation
\begin{equation}\label{ndc}
x'=A(t)x+f(t,x),
\end{equation}
where $A$ is a continuous map from $\R$ to the space of all bounded linear operators  on $X$ satisfying
\[
N:=\sup_{t\in \R} \lVert A(t)\rVert<\infty,
\]
 and $f\colon \R \times X\to X$ is a continuous map. We assume that $f(\cdot,0)=0$  and that there exists $c>0$ such that
\[
\lVert f(t,x)-f(t,y)\rVert \le c\lVert x-y\rVert \quad \text{for $t\in \R$ and $x, y\in X$.}
\]
 We consider the associated linear equation
\begin{equation}\label{ldc}
x'=A(t)x.
\end{equation}
Let $T(t,s)$ be the (linear) evolution family associated to~\eqref{ldc}.  We will assume that~\eqref{ldc} admits a \emph{partial exponential dichotomy}, i.e. that  there exists a family of projections $P^i(s)$,  $i\in \{1, 2,3\}$, $s\in \R$ on $X$  satisfying
\[
P^1(s)+P^2(s)+P^3(s)=\Id, \quad T(t,s)P^i(s)=P^i(t)T(t,s),
\]
for $t,s \in \R$, $i\in \{1, 2, 3\}$ and there exist constants $D, b, d>0$ such that
\[
\lVert T(t,s)P^1(s) \rVert \le De^{-d(t-s)} \quad \text{for $t\ge s$} 
\]
and
\[
\lVert T(t,s)P^2(s)\rVert \le De^{-b(s-t)} \quad \text{for $t\le s$.}
\]
We recall that the nonlinear evolution family associated with~\eqref{ndc} is given by
\[
U(t,s)x=T(t,s)x+\int_s^t T(t,\tau)f(\tau, U(\tau, s)x)\, d\tau,
\]
for $x\in X$ and $t, s\in \R$. Furthemore, set
\[
f_n(x)=\int_n^{n+1} T(n+1, \tau)f(\tau, U(\tau, n)x)\, d\tau, \quad \text{for $x\in X$ and $n\in \Z$.}
\]
Finally, let
\begin{equation}\label{Fn}
A_n=T(n+1, n) \quad \text{and} \quad F_n=A_n+f_n=U(n+1, n), 
\end{equation}
for $n\in \Z$. We observe that the sequence $(A_n)_{n\in \Z}$ admits a partial exponential dichotomy. Finally, we recall the adapted norm $\lVert \cdot \rVert_{\infty}'$ that corresponds to $B=l^\infty$ (see Example~\ref{ex1}). In addition, in this case we will denote $X_B$ by $X_\infty$.

The following is the main result of this section. 
\begin{theorem}
For a sufficiently small $c>0$, there exists $L>0$ with the following property: for any $\varepsilon >0$ and a differentiable function $y\colon \R \to X$  such that 
\[
\sup_{t\in \R}\lVert y'(t)-A(t)y(t)-f(t, y(t))\rVert \le \delta:=L\varepsilon,
\]
there exist $x\colon \R \to X$ such that:
\begin{enumerate}
\item $x\rvert_{(n, n+1)}$ is a solution of~\eqref{ndc} on $(n, n+1)$ for each $n\in \Z$;
\item 
\[
\sup_{t\in \R}\lVert x(t)-y(t)\rVert \le \varepsilon;
\]
\item there exists $\mathbf z=(z_n)_{n\in \Z} \in X_\infty$ satisfying $\lVert \mathbf z\rVert_\infty' \le \varepsilon$ such that~\eqref{pseudo} holds with $x_n=x(n)$, $n\in \Z$, where $F_n$ is given by~\eqref{Fn}.
\end{enumerate}
\end{theorem}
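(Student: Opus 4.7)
The strategy is to reduce to the discrete statement of Theorem~\ref{theo: main} (with $B=l^\infty$) applied to the time--one maps $F_n=U(n+1,n)$ and to the sampled sequence $y_n:=y(n)$, and then reconstruct $x\colon\R\to X$ by solving~\eqref{ndc} on each interval $[n,n+1)$ with initial condition $x_n$. The discretized sequence $A_n=T(n+1,n)$ inherits a partial exponential dichotomy with the projections $P^i(n)$ from the continuous one, and since $\lVert T(n+1,\tau)\rVert\le e^N$ on $[n,n+1]$, a Gronwall bound yields $\lVert U(\tau,n)x-U(\tau,n)y\rVert\le e^{N+c}\lVert x-y\rVert$, so that each $f_n$ is Lipschitz with constant $\tilde c\le c\,e^{2N+c}$. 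For $c$ small enough the contraction condition~\eqref{cG} holds with $\tilde c$ in place of $c$, and Theorem~\ref{theo: main} applies.

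Next I would show that $(y_n)$ is a $(K\delta,l^\infty)$--pseudotrajectory for the discrete system. Writing the defect $r(\tau):=y'(\tau)-A(\tau)y(\tau)-f(\tau,y(\tau))$ with $\lVert r(\tau)\rVert\le\delta$, two applications of the variation of constants formula give
\[
y(n+1)-F_n(y_n)=\int_n^{n+1}T(n+1,\tau)\bigl[f(\tau,y(\tau))-f(\tau,U(\tau,n)y_n)+r(\tau)\bigr]\,d\tau.
\]
A Gronwall estimate on $\phi(\tau):=y(\tau)-U(\tau,n)y_n$, which satisfies $\phi(n)=0$ and $\lVert\phi'(\tau)\rVert\le(N+c)\lVert\phi(\tau)\rVert+\delta$, gives $\lVert\phi(\tau)\rVert\le\delta(e^{N+c}-1)/(N+c)$ on $[n,n+1]$, hence $\lVert y(n+1)-F_n(y_n)\rVert\le K\delta$ uniformly in $n$, with $K=K(N,c)$. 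Theorem~\ref{theo: main} then produces a unique $\mathbf z=(z_n)\in X_\infty$ with $\lVert\mathbf z\rVert_\infty'\le\varepsilon'$ (for a parameter $\varepsilon'$ to be chosen below) such that, setting $x_n:=y_n+z_n^{s,u}$, one has $x_{n+1}=F_n(x_n)+z_{n+1}^c$ for every $n\in\Z$; this already delivers claim~(3).

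To close the argument I would define $x(t):=U(t,n)x_n$ for $t\in[n,n+1)$, so that $x\rvert_{(n,n+1)}$ is a solution of~\eqref{ndc} and $x(n)=x_n$, giving claim~(1). For claim~(2), on each $[n,n+1)$ I split
\[
x(t)-y(t)=\bigl(U(t,n)x_n-U(t,n)y_n\bigr)+\bigl(U(t,n)y_n-y(t)\bigr),
\]
bound the first summand by $e^{N+c}\lVert z_n^{s,u}\rVert\le e^{N+c}\varepsilon'$ using the Gronwall bound of the first paragraph, and the second by $\delta(e^{N+c}-1)/(N+c)$ using the Gronwall estimate for $\phi$; the left limits at integers are handled identically since $U(n+1,n)y_n=F_n(y_n)$. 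Picking $\varepsilon':=\varepsilon/(2e^{N+c})$ and $L>0$ so small that $K\delta\le L_0\varepsilon'$ (where $L_0$ is the Lipschitz quasi-shadowing constant provided by Theorem~\ref{theo: main} for the discretized system) and $\delta(e^{N+c}-1)/(N+c)\le\varepsilon/2$ simultaneously hold with $\delta=L\varepsilon$, every estimate lines up.

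The main obstacle is the bookkeeping of constants: one must take $c$ small enough that the discretized Lipschitz constant $\tilde c$ still verifies~\eqref{cG}, and then balance the two Gronwall constants against the discrete Lipschitz quasi-shadowing constant so that $\delta$ remains linear in $\varepsilon$. The new feature compared to the purely hyperbolic case of~\cite{BD19} is that $x$ is only piecewise continuous, with jumps of size $z_{n+1}^c$ at integer times; but these jumps lie outside the open intervals $(n,n+1)$ where a genuine solution is required, and are absorbed into the quasi-shadowing clause~(3).
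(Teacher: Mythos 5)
Your proposal is correct and follows essentially the same route as the paper: discretize via the time-one maps $F_n=U(n+1,n)$, apply Theorem~\ref{theo: main} with $B=l^\infty$ to the sampled pseudotrajectory $y_n=y(n)$, reconstruct $x$ by solving~\eqref{ndc} on each $[n,n+1)$ from $x_n$, and close claim~(2) with a Gronwall estimate. The only difference is cosmetic: the paper cites~\cite[Theorem 6]{BD} for the Lipschitz bound on $f_n$ and the estimate showing $(y_n)$ is a $(t\delta,l^\infty)$-pseudotrajectory, whereas you re-derive these two facts directly via variation of constants and Gronwall; and for claim~(2) the paper runs a single Gronwall on $x(t)-y(t)$ directly (yielding the bound $\delta(1+t/L')e^{N+c}$), whereas you split $x(t)-y(t)$ into $U(t,n)x_n-U(t,n)y_n$ plus $U(t,n)y_n-y(t)$ and bound the two pieces separately — equivalent after balancing constants.
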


\begin{proof}
By arguing as in the proof of~\cite[Theorem 6.]{BD} and using Theorem~\ref{theo: main}, one can easily show that for a sufficiently small $c>0$, \eqref{nnd} has an $l^\infty$-Lipschitz quasi-shadowing property. Let us denote the associated constant by $L'$.
Furthermore, it is also proved in the proof of~\cite[Theorem 6.]{BD} that there exists $t>0$ such that for any differentiable $y\colon \R \to X$ such that
\begin{equation}\label{pseudo2}
\sup_{t\in \R}\lVert y'(t)-A(t)y(t)-f(t, y(t))\rVert \le \delta,
\end{equation}
we have that the sequence $(y_n)_{n\in \Z}$ given by $y_n=y(n)$, $n\in \Z$ is an $(t\delta, l^\infty)$-pseudotrajectory for~\eqref{nnd}. Set
\[
L:=\frac{1}{(1+t/L')e^{N+c}}>0.
\]

For $\varepsilon >0$, set $\delta=L\varepsilon $ and fix a differentiable map $y\colon \R\to X$ satisfying~\eqref{pseudo2}. By the preceding discussion, the sequence $(y_n)_{n\in \Z}$ given by $y_n=y(n)$, $n\in \Z$ is an $(t\delta, l^\infty)$-pseudotrajectory for~\eqref{nnd}. Hence, Theorem~\ref{theo: main} implies that there exists 
 $\mathbf z=(z_n)_{n\in \Z} \in X_\infty$ satisfying $\lVert \mathbf z\rVert_\infty' \le \frac{t\delta}{L'}\le \varepsilon$ such that~\eqref{pseudo} holds, where $x_n=y_n+z_n^{s,u}$, $n\in \Z$. We define $x\colon \R \to X$ by
\[
x(t)=U(t, n)x_n, \quad \text{for $t\in [n, n+1)$, $n\in \Z$.}
\]
Then, $x$ satisfies the first and the third assertion in the statement of the theorem.  Define $h\colon \R \to X$ by 
\[
h(t)=y'(t)-A(t)y(t)-f(t,y(t)), \quad t\in \R. 
\]
 Observe that for $n\in \Z$ and $t\in [n, n+1)$ we have  that
\[
\begin{split}
& \lVert x(t)-y(t)\rVert \le \lVert x_n-y_n\rVert \\
&+ \bigg{\lVert} \int_n^t (A(s)(x(s)-y(s))+f(s, x(s))-f(s, y(s))-h(s))\, ds \bigg{\rVert} \\
&\le \delta \bigg{(}1+\frac{t}{L'}\bigg{)}+(N+c) \int_t^n \lVert x(s)-y(s)\rVert\, ds.
\end{split}
\]
Hence, Gronwall's lemma implies that 
\[
\sup_{t\in \R}\lVert x(t)-y(t)\rVert \le  \delta \bigg{(}1+\frac{t}{L'}\bigg{)} e^{N+c}=\varepsilon.
\]
The proof of the theorem is completed.
\end{proof}


\medskip{\bf Acknowledgements.}
We would like to thank the anonymous referee for his/hers constructive comments that helped us improve our paper. D.D would like to thank  Ken Palmer for useful discussion.
 L.B. was partially supported by a CNPq-Brazil PQ fellowship under Grant No. 306484/2018-8. D.D. was supported in part by Croatian Science Foundation under the project
IP-2019-04-1239 and by the University of Rijeka under the projects uniri-prirod-18-9 and uniri-pr-prirod-19-16.


\end{document}